%%%submitted to JDG on December 2011

\documentclass[12pt]{amsart}
\usepackage{amsfonts}
\usepackage{amsfonts,latexsym,rawfonts,amsmath,amssymb,amsthm}
\usepackage[plainpages=false]{hyperref}
\usepackage{cite}

\usepackage{graphicx}

\RequirePackage{color}

 \textwidth = 6.0 in
 \textheight = 8.25 in
 \oddsidemargin = 0.25 in
 \evensidemargin = 0.25 in

\voffset=-20pt

%\TagsOnRight
%\newcommand{TagsOnRight}

 \pagestyle{plain}
 \bibliographystyle{plain}

\numberwithin{equation}{section}

\newcommand{\beq}{\begin{equation}}
\newcommand{\eeq}{\end{equation}}
\newcommand{\beqs}{\begin{eqnarray*}}
\newcommand{\eeqs}{\end{eqnarray*}}
\newcommand{\beqn}{\begin{eqnarray}}
\newcommand{\eeqn}{\end{eqnarray}}
\newcommand{\beqa}{\begin{array}}
\newcommand{\eeqa}{\end{array}}

\newtheorem{definition}{Definition}
\newtheorem{lemma}{Lemma}
\newtheorem{remark}{Remark}

\newtheorem{theorem}{Theorem}
\newtheorem{example}{Example}
\newtheorem{corollary}{Corollary}
%%%%****************%%%%

\title  {Mixing, Li-Yorke chaos, distributional chaos and  Kato's chaos to multiple mappings }

\begin{document}

%\address{Hongbo Zeng: Department of Mathematical Sciences, Tsinghua University,  China}

\email{zenghongbo@csust.edu.cn }

%\date{}

\bibliographystyle{plain}

%\tableofcontents
\maketitle

\baselineskip=15.8pt
\parskip=3pt

\centerline {\bf   }
\centerline {}
\vskip 0.5cm

\centerline {\bf   Hongbo Zeng}
\centerline {School of Mathematics and Statistics, Changsha University of Science and Technology}
\centerline {Changsha 410114, Hunan, China}

\vskip20pt

\noindent {\bf Abstract}:
%\begin{abstract}
Let $(X,d)$ be a compact metric space and $F=\{f_1,f_2,...,f_m\}$ be an $m$-tuple of continuous maps from $X$ to itself. In this paper, we introduce the definitions of transitivity, weakly mixing and mixing of multiple mappings $(X,F)$ from a set-valued perspective, which is the semigroup generated by $F$ based on iterated function system.  Firstly, we prove that for multiple mappings, mixing implies distributional chaos in a sequence, Li-Yorke chaos and Kato's chaos. Besides, we demonstrate that $F$ is Kato's chaos if and only if $F^k$ is Kato's chaos for any $k \in \mathbb{N}$. Finally, we construct a symbolic dynamical system to show that distributional chaos may be generated by only two strongly non-wandering points.
%\end{abstract}
 \vskip20pt
 \noindent{\bf Key Words:} multiple mappings; mixing; distributional chaos; Li-Yorke chaos; Kato's chaos
 %\vskip20pt

%\vskip20pt

\baselineskip=15.8pt
\parskip=3pt

%\newpage

%\tableofcontents
\maketitle

\baselineskip=15.8pt
\parskip=3.0pt

%\begin{abstract}

%\end{abstract}

%\keywords{iteration invariant; distributional chaos; Li-York chaos}

%\begin{multicols}{2}
\section{Introduction}
\noindent

Chaos is a highly significant research topic in the field of topological dynamical systems. The first concept of chaos can be traced back to 1975, proposed by Li and Yorke\cite{a2}. Since then, various definitions of chaos have been proposed by researchers in different fields, such as Kato's chaos\cite{a4}, Devaney chaos\cite{a5}, distributional chaos\cite{a6}, etc. So it is an important and significant to understand the relations among the deffetent definitions. Currently, there are numerous research results regarding this field, see, for instance, papers \cite{a7,a9,a42,a43,a44} and the references therein.

In 2016, Hou and Wang\cite{a1} defined multiple mappings derived from iterated function system (IFS). Iterated function system is a significant branch of fractal theory, reflecting the fundamental facts of the world\cite{a11,a12,a13,a25,a26,a31}. Hou and Wang focused primarily on studying the Hausdorff metric entropy of multiple mappings. Additionally, they introduced the notions of Hausdorff metric Li-Yorke chaos and distributional chaos from a set-valued view. Then in \cite{a14}, authors provided a sufficient condition for $F$ to exhibit distributional chaos in a sequence and chaos in the sense of Li-Yorke. They also showed that if $F$ is Hausdorff metric disdributionally chaotic, then there exist at least two strongly nonwandering points of $F$. In \cite{a15}, authors have proved that two topological conjugacy dynamical systems to multiple mappings have simultaneously Hausdorff  metric Li-Yorke chaos (distributional chaos,respectively), Hausdorff metric Li-Yorke $\delta$-chaos is equivalent to Hausdorff metric distributional $\delta$-chaos in a sequence and that for Hausdorff metric Li-Yorke chaos, there is non-mutual implication between the multiple mappings $F=\{f_{1}, f_{2}, \cdots,
f_{m}\}$ and each element $f_i$ $(i=1, 2, \cdots,m)$ in $F$. Recently, Zhao\cite{a16} introduced the definitions of sensitivity, accessibility, and Kato's chaos of multiple mappings and gave a sufficient condition for multiple mappings to be sensitive, accessible. It is worth noting that researchers studying iterated function systems often approach the topic from a group perspective rather than a set-valued perspective. This also establishes a close connection between multiple mappings and set-valued mappings, or we can consider multiple mappings as a special case of set-valued mappings.

The above special set-valued mappings consider the image of one point as a set (a compact set), with the Hausdorff metric of set-valued mappings space, and investigate the relationships among the images of points under multiple mappings from a set-valued view. As is known to all, the main idea of the system $(X,f)$ is to point out how the point of $X$ moves. However, when it comes to the origin of life, numerical simulation, demography, economic variables, target selection, population migration, etc., simply knowing the movement of point of $X$ is not enough. Besides, it is crucial to acknowledge the valuable role of set-valued mappings in addressing complex problems involving uncertainty, ambiguity, or multiple criteria. Set-valued mappings provide versatility and flexibility, making them highly beneficial across various domains. One prominent application of set-valued mappings is in  optimization problems, where the objective is to identify the optimal set of solutions. Set-valued mappings also prove their usefulness in decision-making processes that require considering multiple criteria or preferences. Unlike assigning each data point to a single category, set-valued mappings can represent uncertainty or ambiguity by assigning data points to multiple categories. In fact, the applications of set-valued mappings are vast and diverse, encompassing numerous fields beyond those mentioned here. Therefore, studying the distance relationship of points iterated under multiple mappings from a set-valued perspective is highly significant. For more recent results about these topics, one is referred to\cite{a1,a14,a15,a22,a41} and references therein.

As we all know, it is crucial to study the properties of the chaos to multiple mappings, the relationship among the various chaos to multiple mappings, and the relationship between the chaos to multiple mappings and the chaos in the classic sense. We shall study them in this paper and forthcoming papers. In the current paper, we investigate the relationship between mixing and chaos(distributional chaos in a sequence, Li-Yorke chaos and Kato's chaos) to multiple mappings. For a single continuous self-map, Liao et al. \cite{a43} claimed that mixing implies distributively chaotic in a sequence and Li-Yorke chaos, and Wang et al. \cite{a44} claimed that weakly mixing implies Kato's chaos. Besides, we study  the iteration invariance of Kato's chaos to the multiple mappings. There are many results about iteration invariance of chaos in the classic sense(see, for example,\cite{a46,a47,a48}). Finally, we try to present an example which is Hausdorff metric distributional chaos but only has two strongly non-wandering points. This result is the supplement of the Theorem 1 in \cite{a14}.

This paper is structured as follows. In Section 2, we shall first give some definitions and lemmas about multiple mappings. In Section 3, the main conclusions will be given. In Section 4, we shall give some examples.

\section{Preparations and lemmas}

Throughout this paper, $X$ is a compact
metric space with a metric $d$, and assume that $\mathbb{N}=\{1,2,3,...\}$. Let $F=\{f_{1}, f_{2}, \cdots,
f_{m}\}$ be an $m$-tuple of continuous  self-maps on $X$. Then $F(x)=\{f_{1}(x), f_{2}(x), \ldots,
f_{m}(x)\}$ is a nonempty compact subset of $X$. Firstly, we will give some definitions about set-valued spaces (see \cite{a1}). Note
$$\mathcal{K}(X)=\{K \mid K~ \text{is a nonempty compact subset of}~ X \},$$
then $F$ is a map from $X$ to $\mathcal{K}(X)$, and the metric on $\mathcal{K}(X)$ is denoted by $d_{H}$,
which is called Hausdorff metric and defined by $$d_{H}(A, B)=\max\{dist(A, B), dist(B, A)\},$$
where $dist(A, B)=\displaystyle\sup_{x\in A}\inf_{y\in B} d(x, y)$. It is obvious that $(\mathcal{K}(X), d_{H})$ is a compact metric space. Given $m\geq 1$, for any $n\in\mathbb{N}$, $F^n : X \rightarrow \mathcal{K}(X)$ is defined by $\forall x\in X$, $$F^{n}(x)=\{f_{i_{1}}\circ f_{i_{2}}\circ\cdots
\circ f_{i_{n}}(x)\mid i_{1},i_{2}, \cdots, i_{n}=1, 2, \cdots,
m\}.$$ In fact, for any subset $A\subset X$, one can define $$F^{n}(A)=\bigcup\limits_{a\in A}F^{n}(a), F^{-n}(A)=\{x\in X\mid F^{n}(x)\subset A\}.$$ Specifically, if $A\in \mathcal{K}(X)$, then $F^{n}(A)\in \mathcal{K}(X)$. Then it is clear that $F$ naturally induces a map $\widetilde{F}^{n}: \mathcal{K}(X)\rightarrow \mathcal{K}(X)$ denoted by $$\widetilde{F}^{n}(A)=F^{n}(A), \forall A\in\mathcal {K}(X).$$ One can get that $F^{n}$ and $\widetilde{F}^{n}$ are both continuous from \cite{a1}. Notice that if each element $f_i(i=1,2,\cdots,m)$ in $F=\{f_{1}, f_{2}, \cdots,
f_{m}\}$ is the same, then the Hausdorff metric dynamical systems of multiple mappings is actually the classical dynamical systems, the former is an extension of the latter.
Denote $B(A,\varepsilon)=\{x\in X \mid dist(x,A)<\varepsilon\}$, where $A\subseteq X$.

Next some definitions of Hausdorff metric chaos from a set-valued view were introduced.
\begin{definition}
The multiple mappings $F$  is called Hausdorff metric Li-Yorke chaos if there is an uncountable set $S\subset X$ satisfying for any points $x, y\in S$ with $x\neq y$, $$\liminf_{n\rightarrow\infty} d_{H}(F^{n}(x), F^{n}(y))=0,~~~ \limsup_{n\rightarrow\infty} d_{H}(F^{n}(x), F^{n}(y))>0.$$
\end{definition}

\begin{definition}
The multiple mappings $F$  is called Hausdorff metric Li-Yorke $\delta$-chaos or Hausdorff metric uniformly Li-Yorke chaos if  there are $\delta>0$ and uncountable set $S\subset X$ satisfying for any points $x, y\in S$ with $x\neq y$, $$\liminf_{n\rightarrow\infty} d_{H}(F^{n}(x), F^{n}(y))=0,~~~ \limsup_{n\rightarrow\infty} d_{H}(F^{n}(x), F^{n}(y))>\delta.$$
\end{definition}

\begin{definition}
For any pair $(x, y)\in X\times X$ and any $n\in \mathbb{N}$, denote distributional function
$\phi_{xy}^{n}(F, \cdot): R \rightarrow [0,1]$
by $$\phi_{xy}^{n}(F, t)=\frac{1}{n}\sharp \{0\leq i\leq n-1\mid
d_{H}(F^{i}(x), F^{i}(y))<t\},$$
where $\sharp A$ denotes the cardinality of the set $A$. when $t<0$, $\phi_{xy}^{n}(F, t)=0$.

Let
$$\phi_{xy}(F, t)=\liminf_{n\rightarrow\infty}\phi_{xy}^{n}(F, t), ~~\phi_{xy}^*(F, t)=\limsup_{n\rightarrow\infty}\phi_{xy}^{n}(F, t).$$
Then $\phi_{xy}(F, t)$ and $\phi_{xy}^{*}(F, t)$ respectively denote lower and upper distributional function. The multiple mappings $F$  is said to be Hausdorff metric distributional chaos if there is an uncountable set $S\subset X$ satisfying for any $x, y\in S$ with $x\neq y$, $\phi^*_{xy}(F, t)=1$ for all $t>0$ and $\phi_{xy}(F, \varepsilon)=0$ for some $\varepsilon>0$.
\end{definition}

\begin{definition}
Suppose that $\{p_{k}\}_{k=0}^\infty$ is an increasing sequence of positive integers. For any pair $(x, y)\in X\times X$ and any $n\in \mathbb{N}$, put distributional function
$\phi_{xy}^{n}(F, \cdot, p_{k}): R \rightarrow [0,1]$
by $$\phi_{xy}^{n}(F, t, p_{k})=\frac{1}{n}\sharp \{0\leq i\leq n-1\mid
d_{H}(F^{p_{i}}(x), F^{p_{i}}(y))<t\},$$
when $t<0$, $\phi_{xy}^{n}(F, t)=0$.

Let
$$\phi_{xy}(F, t, p_{i})=\liminf_{n\rightarrow\infty}\phi_{xy}^{n}(F, t, p_{i}), ~~\phi_{xy}^*(F, t, p_{i} )=\limsup_{n\rightarrow\infty}\phi_{xy}^{n}(F, t, p_{i}).$$
Then $\phi_{xy}(F, t, p_{i})$ and $\phi_{xy}^{*}(F, t, p_{i})$ are respectively called lower distributional function and upper distributional function.
The multiple mappings $F$  is said to be Hausdorff metric distributional chaos in a sequence if there is an uncountable set $S\subset X$ satisfying for any distinct points $x, y\in S$,$\phi^*_{xy}(F, t, p_{i})=1$ for all $t>0$ and $\phi_{xy}(F, \varepsilon, p_{i})=0$ for some $\varepsilon>0$. The dynamical system $(X, F)$ is said to be Hausdorff metric distributional $\delta$-chaos in a sequence or Hausdorff metric uniformly distributively chaotic in a sequence, if there exist $\delta>0$ and uncountable set $S\subset X$ such that for any $x, y\in S$ with $x\ne y $, satisfying $\phi^*_{xy}(F, t, p_{i})=1$ for all $t>0$ and $\phi_{xy}(F, \delta, p_{i})=0$ for the $\delta$ above.
\end{definition}

\begin{remark}
It is obvious that $(X, F)$ is Hausdorff metric distributional $\delta$-chaos in a sequence (Hausdorff metric $\delta$-Li-Yorke chaos, respectively) implies that $(X, F)$ is Hausdorff metric distributional chaos in a sequence (Hausdorff metric Li-Yorke chaos, respectively) by definition.
\end{remark}

\begin{definition}\cite{a16}
The multiple mappings $F$  is said to be (Hausdorff metric) sensitive, if there is $\delta>0$ such that for any nonempty open set $U\subset X$, there exist $x,y\in U$ and $n\in \mathbb{N}$ such that $d_H(F^n(x),F^n(x))>\delta$. The multiple mappings $F$  is said to be (Hausdorff metric) accessible, if for any $\varepsilon>0$ and any nonempty open set $U,V\subset X$, there exist $x\in U,y\in V$ and $n\in \mathbb{N}$ such that $d_H(F^n(x),F^n(x))<\varepsilon$. The multiple mappings $F$  is said to be (Hausdorff metric) Kato's chaotic, if it is (Hausdorff metric) sensitive and accessible.

\end{definition}

\begin{definition}
$x\in X$ is said to be a strongly nonwandering point of $F$ if for any open neighborhood $V$ of $x$, there is $y\in X$ satisfying
$$\limsup_{n\rightarrow\infty}\frac{1}{n}\sharp\{i \mid F^i(y)\cap V\neq \emptyset,  0\leq i\leq n-1\}>0.$$
\end{definition}

  Transitivity and mixing are important research topics of dynamical systems, so we will define them from a set-valued  perspective in the following. Put $R(F^n)=\{F^n(x)\mid x\in X\}$ for all $n\in \mathbb{N}$, that is the range of $F^N$, which is a subset of $\mathcal{K}(X)$. However, since the multiple mappings is a special set-valued mappings from $X$ to $\mathcal{K}(X)$, there is no natural way to extend the notion of transitivity and mixing. Therefore, in this paper, when talking about the transitivity, weakly mixing and mixing, we consider a special multiple mappings, which satisfying  $R(F^{N+1})=R(F^N)$ for some $N\in\mathbb{N}$ and $X=F(X)$. Please remember that the map always denotes the above special multiple mappings in the rest of the paper without special illustration. For the convenience of the following, we denote the range of $F^N$ above by $R_a(F)$ and the extension of $V\subseteq X$ to $\mathcal{K}(X)$ by $e(V)=\{K\in \mathcal{K}(X) \mid K\subseteq V\}$. One can get that if $V$ is an nonempty open set of $X$, then $e(V)$ is an nonempty open set of $\mathcal{K}(X)$ from \cite{a1}.

\begin{definition}
 A map $F$ is (topological) transitive if for any nonempty open sets $U$ and $V$ of $X$ with $V \cap R_a(F)\neq \emptyset$, there exists a $n\in \mathbb{N}$ such that $F^n(U)\cap e(V) \neq \emptyset$, where $F^n(U)\cap e(V) \neq \emptyset$ means that $\exists x\in U$ such that $F^n(x)\in e(V)$.
\end{definition}

\begin{definition}
 A map $F$ is (topological) weakly mixing if $F\times F: X\times X\rightarrow \mathcal{K}(X)\times \mathcal{K}(X)$ is transitive, that is  for any nonempty open sets $U_1,U_2$ and $V_1,V_2$ of $X$ with $V_1\cap R_a(F)\neq \emptyset$ and $V_2\cap R_a(F)\neq \emptyset$, there exists a $n\in \mathbb{N}$ such that $F^n(U_i)\cap e(V_i) \neq \emptyset,i=1,2$.
\end{definition}

\begin{definition}
 A map $F$ is (topological) mixing if for any nonempty open sets $U$ and $V$ of $X$ with $V\cap R_a(F)\neq \emptyset$, there exists a $N\in \mathbb{N}$ such that for all $n\geq N$, $F^n(U)\cap e(V) \neq \emptyset$.
\end{definition}

\begin{remark}
By definitions, it can be easily verified that mixing implies weakly mixing, weakly mixing imply transitive, and $F$ is (topological) mixing implies $F_m$ is transitive for any $m\in \mathbb{N}$, where $F_m$ is the Cartesian product of $m$ times $F$.
\end{remark}

For the purposes of the following, we recall some definitions of one-sided symbolic dynamical systems.
\begin{definition}
The one-sided sequence space $\Sigma_2:=\{\alpha=a_0a_1...\mid a_i=1 \text{or} 2,i\geq0\}$ is a metric space with the distance $\rho(\alpha,\beta)=\Sigma_{i=0}^\infty d(a_i,b_i)/2^i$, where $\alpha=a_0a_1,...,\beta=b_0b_1,...\in \Sigma_2,d(a_i,b_i)=1$ if $a_i\neq b_i$ and $d(a_i,b_i)=0$ if $a_i= b_i$ for $i\geq0$. The shift map $\sigma:\Sigma_2\rightarrow \Sigma_2$ is $\sigma(a_0a_1...)=(a_1a_2...)$. It is well know that $\sigma$ is a continuous map and $(\Sigma_2,\sigma)$ is a compact symbolic dynamical system. For $\alpha=(a_0a_1...)\in\Sigma_2$, let $\alpha[i,j]:=a_ia_{i+1}...a_j$ be the finite sequence from the $(i+1)$-th symbol to $(j+1)$-th symbol of $\alpha$.
\end{definition}

\begin{lemma}\label{yinli22}
The following are equivalent.

(i) $F$ is transitive.

(ii) For any nonempty open sets $U$ and $V$ of $X$ with $V \cap R_a(F)\neq \emptyset$, there exists a $n\in \mathbb{N}$ such that $F^{-n}(e(V))\cap U \neq \emptyset$.

(iii) For any nonempty open sets $U$ and $V$ of $X$ with $V \cap R_a(F)\neq \emptyset$ and any $k>0$, there exists a positive integer $n>k$ such that $F^n(U)\cap e(V) \neq \emptyset$.
\end{lemma}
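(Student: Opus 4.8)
The plan is to treat the three implications asymmetrically: the equivalence (i) $\Leftrightarrow$ (ii) is a direct unwinding of the definitions, (iii) $\Rightarrow$ (i) is immediate, and the only substantive work is (i) $\Rightarrow$ (iii), which I would obtain by a pull-back argument mimicking the classical strengthening of transitivity into ``infinitely-often'' recurrence for surjective maps, using the standing hypotheses $X=F(X)$ and $R(F^{N+1})=R(F^{N})$.

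For (i) $\Leftrightarrow$ (ii) I would simply observe that, by the definitions of $F^{-n}$ and of $e(V)$,
$$F^{-n}(e(V))=\{x\in X\mid F^{n}(x)\subseteq V\},$$
so the assertion ``$\exists\, x\in U$ with $F^{n}(x)\in e(V)$'' (that is, $F^{n}(U)\cap e(V)\neq\emptyset$) is literally the same as $F^{-n}(e(V))\cap U\neq\emptyset$; no dynamics is involved. Likewise (iii) $\Rightarrow$ (i) needs nothing, since taking $k=0$ in (iii) already produces an $n\in\mathbb{N}$ with $F^{n}(U)\cap e(V)\neq\emptyset$.

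The heart of the matter is (i) $\Rightarrow$ (iii). Fix nonempty open $U,V$ with $V\cap R_a(F)\neq\emptyset$ (read as $e(V)\cap R_a(F)\neq\emptyset$, which is what the types force) and fix $k>0$. Let $N_0$ be the stabilization index, so that $R(F^{n})=R_a(F)$ for all $n\geq N_0$; I would first record this by iterating the relation $R(F^{n+1})=\widetilde{F}(R(F^{n}))$. Then I set $N=\max\{N_0,k+1\}$ and define
$$W:=F^{-N}(e(V))=\{y\in X\mid F^{N}(y)\subseteq V\}.$$
I need three facts about $W$. First, $W$ is open, since $W=(F^{N})^{-1}(e(V))$ with $F^{N}:X\to\mathcal{K}(X)$ continuous and $e(V)$ open (both cited from \cite{a1}). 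Second, $W\neq\emptyset$: because $N\geq N_0$ we have $R(F^{N})=R_a(F)$, so the hypothesis $e(V)\cap R_a(F)\neq\emptyset$ furnishes some $x_0$ with $F^{N}(x_0)\subseteq V$, i.e.\ $x_0\in W$. Third, and most delicately, $e(W)\cap R_a(F)\neq\emptyset$, so that (i) may legitimately be applied to the pair $(U,W)$; this I would verify through the semigroup identity
$$F^{a+b}(x)=\widetilde{F}^{a}(F^{b}(x))=\bigcup_{y\in F^{b}(x)}F^{a}(y),$$
which shows that $F^{N_0}(z)\subseteq W$ is equivalent to $F^{N+N_0}(z)\subseteq V$, and the latter holds for a suitable $z$ again because $R(F^{N+N_0})=R_a(F)$ meets $e(V)$.

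With these checks in place I would apply (i) to $(U,W)$ to get some $m\in\mathbb{N}$ and $x\in U$ with $F^{m}(x)\subseteq W$; the semigroup identity then yields $F^{N+m}(x)=\bigcup_{y\in F^{m}(x)}F^{N}(y)\subseteq V$, so that $n:=N+m>k$ witnesses (iii). I expect the main obstacle to be precisely the third of these checks, namely verifying that the auxiliary target $W$ still meets the range $R_a(F)$ in the $e(\cdot)$-sense; this is the only step where the stabilization hypothesis $R(F^{N+1})=R(F^{N})$ and the semigroup property are genuinely used, and where one must be careful that the set-valued inclusion ``$\subseteq V$'' is correctly propagated when passing from a single iterate to a composite one.
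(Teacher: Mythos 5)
Your proof is correct, and for the two easy implications it coincides with the paper's; the real difference lies in how you reach (iii). The paper runs the cycle (i) $\Rightarrow$ (ii) $\Rightarrow$ (iii) $\Rightarrow$ (i) and proves (ii) $\Rightarrow$ (iii) by chaining $k+1$ successive pull-backs: it sets $V_0=F^{-n_0}(e(V))$, notes that $V_0$ is open and still meets $R_a(F)$, applies (ii) again to the pair $(U,V_0)$, and iterates, so that the accumulated exponent $n=n_0+n_1+\cdots+n_k$ exceeds $k$ because each $n_i\geq 1$. You instead perform a single pull-back of depth $N=\max\{N_0,k+1\}$ chosen large in advance and then invoke transitivity once; both arguments rest on the same two ingredients (openness of $F^{-N}(e(V))$ via continuity of $F^{N}$ and openness of $e(V)$, and the semigroup identity $F^{a+b}(x)=\bigcup_{y\in F^{b}(x)}F^{a}(y)$ to recombine exponents), so this is a different decomposition of the same underlying idea rather than a new method. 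Your version has two concrete advantages. First, you explicitly verify the one genuinely delicate point --- that the auxiliary open set $W=F^{-N}(e(V))$ still satisfies $e(W)\cap R_a(F)\neq\emptyset$ before transitivity is applied to $(U,W)$ --- by pulling a witness back from $R(F^{N+N_0})=R_a(F)$; the paper dispatches the analogous claim for $V_0$ with the unexplained phrase ``by the property of $R_a(F)$,'' so you are filling a gap in the published proof rather than opening one. Second, you correctly flag the type mismatch in the hypothesis ``$V\cap R_a(F)\neq\emptyset$'' and read it as $e(V)\cap R_a(F)\neq\emptyset$, which is the reading the paper's own usage forces. The only nitpick: in (iii) $\Rightarrow$ (i) you ``take $k=0$,'' whereas (iii) quantifies over $k>0$; taking $k=1$ yields the same conclusion.
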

\begin{proof}
$(iii)\Rightarrow (i)$ is obvious.

$(i)\Rightarrow (ii)$. Since $F$ is transitive, there exist $x_0 \in U$ and $n \in \mathbb{N}$, such that $F^n(x_0)\in e(V)$. Put $A=F^n(x_0)$. Then we have $A\in e(V)$ such that $x_0 \in F^{-n}(e(V))$. So $F^{-n}(e(V))\cap U \neq \emptyset$.

$(ii)\Rightarrow (iii)$. For any nonempty open sets $U$ and $V$ of $X$ with $V \cap R_a(F)\neq \emptyset$, we have that there exists a $n_0\in \mathbb{N}$ such that $F^{-n_0}(e(V))\cap U \neq \emptyset$ by $(ii)$. Since $F^{n}$ is continuous for any $n$, $F^{-n_0}(e(V))$ is nonempty open in $X$, put ${V}_0=F^{-n_0}(e(V))$, by the property of $R_a(F)$, we have $V_0 \cap R_a(F)\neq \emptyset$. Again by using $(ii)$ repeatedly, we know that there exists a $n_1\in \mathbb{N}$ such that $F^{-n_1}(e({V}_0))\cap U \neq \emptyset$, then we easily see that $F^{n_1+n_0}(U)\cap e(V) \neq \emptyset$. Using the same way $k$ times, we have know that there exist positive integer $n_2,n_3,...,n_k$ such that $F^{n_0+n_1+...+n_k}(U)\cap e(V) \neq \emptyset$. Let $n=n_0+n_1+...+n_k$, so we easily have that $n>k$ and $F^n(U)\cap e(V) \neq \emptyset$.
This completes the proof.
\end{proof}

%\begin{remark}
%The lemma \ref{yinli3} above is extension of the corollary 2.2 in \cite{1a}.
%\end{remark}

\begin{lemma}(\cite{a49},lemma 2.2)\label{yinli2}
There is an uncountable subset $E$ in $\Sigma_2$ such that for any different points $s=s_0s_1...,t=t_0t_1...\in E$, we have $ s_n=t_n$ for infinitely many $n$ and $s_m\neq t_m$ for infinitely many $m$, where $\Sigma_2$ denotes the symbolic dynamical system.
\end{lemma}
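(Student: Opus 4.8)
The plan is to exhibit $E$ explicitly as the image of an injective coding map $\phi\colon \Sigma_2 \to \Sigma_2$ built from alternating ``marker'' blocks and ``data'' blocks, arranged so that the marker blocks force agreement at infinitely many coordinates while the data blocks force disagreement at infinitely many coordinates. Since $\Sigma_2$ is uncountable and $\phi$ will be injective, $E := \phi(\Sigma_2)$ will be uncountable, and the two combinatorial properties will be read off directly from the block structure.

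Concretely, for $x = x_0 x_1 x_2 \cdots \in \Sigma_2$ I would define $\phi(x)$ to be the concatenation
$$ \phi(x) = W_0\, P_0(x)\, W_1\, P_1(x)\, W_2\, P_2(x)\cdots, $$
where $W_k = \underbrace{11\cdots 1}_{k+1}$ is a fixed block of $(k+1)$ ones (the same for every $x$) and $P_k(x) = x_0 x_1 \cdots x_k$ is the length-$(k+1)$ prefix of $x$. Because the block lengths $|W_k| = |P_k| = k+1$ are prescribed in advance and do not depend on $x$, every coordinate of $\phi(x)$ sits in a canonically determined block; in particular one recovers each $P_k(x)$, hence $x$ itself, from $\phi(x)$, so $\phi$ is injective and $E$ is uncountable. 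All symbols used are in $\{1,2\}$, so indeed $\phi(x)\in\Sigma_2$.

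Next I would verify the two required properties for distinct $s = \phi(x)$ and $t = \phi(x')$. For the agreements: the marker blocks $W_k$ occupy infinitely many coordinates of the sequence, and both $s$ and $t$ carry the symbol $1$ at each such coordinate, so $s_n = t_n$ for infinitely many $n$. For the disagreements: let $j$ be the least index with $x_j \neq x'_j$ (it exists since $x \neq x'$). Then for every $k \ge j$ the data block $P_k$ contains the coordinate $j$, where $s$ shows $x_j$ and $t$ shows $x'_j$; since the blocks $P_j, P_{j+1}, \ldots$ occupy disjoint, infinitely many coordinates, $s_m \neq t_m$ for infinitely many $m$.

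The only real obstacle is the disagreement clause, and it is exactly what dictates the use of \emph{growing prefixes} rather than a naive coding: if one simply placed the single symbol $x_k$ in the $k$-th data block, then a pair $x, x'$ differing in only one coordinate would disagree at only finitely many positions, violating the requirement. Repeating each coordinate $x_j$ in all later prefix blocks guarantees that a single coordinate of disagreement propagates to infinitely many coordinates of disagreement in the code, which is the crux of the construction; the remaining checks are the routine verifications above.
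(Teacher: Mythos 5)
Your construction is correct and complete: the coding map $\phi(x)=W_0P_0(x)W_1P_1(x)\cdots$ with fixed-length marker blocks and growing prefix blocks is injective (so $E=\phi(\Sigma_2)$ is uncountable), the marker positions give infinitely many agreements for any pair, and the propagation of the first differing coordinate $x_j$ through every later prefix block $P_k$ ($k\ge j$) gives infinitely many disagreements. You also correctly flag the one genuine pitfall --- coding each coordinate only once would let sequences differing in a single entry disagree at only finitely many positions --- and your use of nested prefixes handles it. Note, however, that the paper itself offers no proof to compare against: the lemma is quoted verbatim from reference [21] (Wang, Liao, Yang, Lemma 2.2) and used as a black box. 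Your argument is therefore a self-contained replacement for that citation; it is in the same spirit as the standard block-coding constructions used for such ``proximal but not asymptotic in coordinates'' families in symbolic dynamics, and nothing in the paper's later use of the lemma (in Lemma 2.14 and in Example 4.4, where only the two combinatorial properties and uncountability of $E$ are invoked) requires anything beyond what you prove.
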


\begin{lemma}\label{yinli3}
Supposed $A_1,A_2\in R_a(F)$ with $A_1\neq A_2$ and $\{p_k\}_{k=1}^{\infty}$ is a sequence of positive integers. If $\forall C=C_1C_2...$, where $C_k=\overline{B(A_1,\frac{1}{k})}$ or $\overline{B(A_2,\frac{1}{k})}$, there exists $x_c\in X$, such that $\forall k\geq1$, we have $F^{p_k}(x_c)\in e(C_k)$, then $F$ is uniformly distributively chaotic in a sequence.
\end{lemma}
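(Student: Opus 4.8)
The plan is to realize an uncountable scrambled set as the points $x_{s}=x_{C^{s}}$ attached to a carefully chosen uncountable family of codes $C^{s}$, exploiting the freedom in the hypothesis to prescribe, for each code, a point whose orbit visits prescribed shrinking neighbourhoods of $A_{1}$ and $A_{2}$ along the times $p_{k}$. Throughout I would fix $\delta=\tfrac12 d_{H}(A_{1},A_{2})>0$, which is positive since $A_{1}\neq A_{2}$. Note that a bare ``infinitely often'' agreement/disagreement pattern (as delivered by Lemma \ref{yinli2}) only yields Li--Yorke behaviour, whereas the conclusion demands the stronger distributional statement, i.e. control of densities along $\{p_{k}\}$; for this reason I would not feed the codes directly from Lemma \ref{yinli2}, but arrange them on a block pattern and use Lemma \ref{yinli2} only to index the family.

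First I would fix a partition of the index set $\{1,2,3,\dots\}$ into consecutive finite blocks, alternating ``agreement regions'' $I_{1},I_{2},\dots$ and ``free regions'' $J_{1},J_{2},\dots$, whose lengths grow so fast that each region dominates the union of all preceding ones. Then I would invoke Lemma \ref{yinli2} to obtain an uncountable $E\subseteq\Sigma_{2}$ in which any two distinct $s,t\in E$ disagree in infinitely many coordinates. For each $s=s_{1}s_{2}\cdots\in E$ I define a code $C^{s}=C_{1}^{s}C_{2}^{s}\cdots$ by setting $C_{k}^{s}=\overline{B(A_{1},\tfrac1k)}$ whenever $k$ lies in an agreement region, and on the $j$-th free region $J_{j}$ setting $C_{k}^{s}=\overline{B(A_{1},\tfrac1k)}$ if $s_{j}=1$ and $C_{k}^{s}=\overline{B(A_{2},\tfrac1k)}$ if $s_{j}=2$. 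The hypothesis then supplies $x_{s}\in X$ with $F^{p_{k}}(x_{s})\in e(C_{k}^{s})$ for all $k$, and I put $S=\{x_{s}:s\in E\}$.

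The two required density statements would come from two subsequences of times. For distinct $s,t\in E$ and $k$ in any agreement region, both $F^{p_{k}}(x_{s})$ and $F^{p_{k}}(x_{t})$ lie in $\overline{B(A_{1},\tfrac1k)}$, whence (see the obstacle below) $d_{H}(F^{p_{k}}(x_{s}),F^{p_{k}}(x_{t}))\le\tfrac2k$; since the agreement regions are dominating and carry indices tending to infinity, evaluating $\phi^{n}_{x_{s}x_{t}}(F,t,p_{k})$ at $n=\max I_{j}$ forces $\phi^{*}_{x_{s}x_{t}}(F,t,p_{k})=1$ for every $t>0$. Conversely, by the choice of $E$ there are infinitely many $j$ with $s_{j}\neq t_{j}$; on such a free region $J_{j}$ one set lies in $\overline{B(A_{1},\tfrac1k)}$ and the other in $\overline{B(A_{2},\tfrac1k)}$, so the triangle inequality gives $d_{H}(F^{p_{k}}(x_{s}),F^{p_{k}}(x_{t}))\ge d_{H}(A_{1},A_{2})-\tfrac2k\ge\delta$ for all large $k$; evaluating at $n=\max J_{j}$ along these dominating disagreement blocks forces $\phi_{x_{s}x_{t}}(F,\delta,p_{k})=0$. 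In particular $x_{s}\neq x_{t}$, so $s\mapsto x_{s}$ is injective and $S$ is uncountable, which is exactly Hausdorff metric distributional $\delta$-chaos in a sequence.

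The main obstacle is precisely the pair of Hausdorff estimates invoked above. Indeed $F^{p_{k}}(x_{c})\in e(\overline{B(A_{i},\tfrac1k)})$ literally means only $F^{p_{k}}(x_{c})\subseteq\overline{B(A_{i},\tfrac1k)}$, that is $dist(F^{p_{k}}(x_{c}),A_{i})\le\tfrac1k$ in one direction of the Hausdorff metric; it does not by itself control $dist(A_{i},F^{p_{k}}(x_{c}))$, so containment of two sets in one common shrinking neighbourhood need not make them $d_{H}$-close, and containment in neighbourhoods of $A_{1}$ and of $A_{2}$ need not keep them $d_{H}$-far. Thus the crux is to upgrade the one-sided containment to the two-sided bound $d_{H}(F^{p_{k}}(x_{c}),A_{i})\le\tfrac1k$, which is exactly what both the closeness estimate on agreement regions and the reverse triangle estimate on disagreement regions require; once this is secured, the density bookkeeping over the dominating blocks is routine.
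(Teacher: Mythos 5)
Your construction is essentially the paper's: the paper also partitions the index set into rapidly growing blocks (the factorial blocks $n!<k\le (n+1)!$), codes each block by a symbol of $s\in E$ with $E$ taken from Lemma~\ref{yinli2}, and extracts $\phi^{*}=1$ from the blocks where $s$ and $t$ agree and $\phi=0$ (at $\varepsilon=\tfrac12 d_H(A_1,A_2)$) from the blocks where they disagree. The only difference is cosmetic: the paper uses both the infinite-agreement and infinite-disagreement properties of $E$, whereas you hard-wire agreement on alternate blocks and use only infinite disagreement. The density bookkeeping is the same in both versions.

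The obstacle you flag, however, is real, and you should know that the paper does not resolve it --- it silently assumes it away. In Step 1 the paper writes that ``by the property of $F^{p_k}(x)$'' one has $d_H(F^{p_k}(x),F^{p_k}(y))<\delta$ whenever both images lie in $e(\overline{B(A_j,\frac1k)})$, and in Step 2 it asserts $d_H(F^{p_k}(x),F^{p_k}(y))>\varepsilon$ when the images lie in $e(\overline{B(A_1,\frac1k)})$ and $e(\overline{B(A_2,\frac1k)})$ respectively. As you observe, $K\in e(\overline{B(A_i,\frac1k)})$ only gives $dist(K,A_i)\le\frac1k$, not $dist(A_i,K)\le\frac1k$; so two sets contained in the same shrinking neighbourhood of a non-singleton $A_1$ need not be $d_H$-close (take two distinct singletons inside $A_1$), and two sets contained in shrinking neighbourhoods of $A_1$ and $A_2$ can even coincide if $A_1\cap A_2\neq\emptyset$. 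Since the elements of $R_a(F)$ are finite sets $F^{N}(x)$ with up to $m^{N}$ points, they are not singletons in general, so neither estimate follows from the stated hypothesis. The lemma would be correct if the hypothesis were strengthened to the two-sided bound $d_H(F^{p_k}(x_c),A_i)\le\frac1k$ (equivalently, membership in a Hausdorff-metric ball around $A_i$ in $\mathcal{K}(X)$ rather than in $e(\overline{B(A_i,\frac1k)})$); but note that the downstream application in Theorem~\ref{dingli1} only produces the one-sided containment, since the paper's notion of mixing is formulated via $F^{n}(U)\cap e(V)\neq\emptyset$. So your proposal is exactly as complete as the paper's own proof, and the gap you identify is a gap in the lemma as stated, not a step you failed to find.
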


\begin{proof}
Take the set $E$ as in lemma \ref{yinli2}. Then by the assumptions, for any $s=s_0s_1...\in E$, there exists $x_s\in X$, such that for each $k\geq1$, $n!<k\leq (n+1)!$, we have
\[F^{p_k}(x_s)\in\begin{cases}
e(\overline{B(A_1,\frac{1}{k})}), &  s_n=0, \\
e(\overline{B(A_2,\frac{1}{k})}), &  s_n=1.
\end{cases}\]
Put $D=\{x_s\mid s\in E\}$. It is easy to see that if $s\neq t$, then $x(s)\neq x(t)$. Since $E$ is uncountable set, $D$ is uncountable set.
Let $x(s),y(t)\in D$ and $x(s)\neq y(t)$, where $s=s_0s_1...,t=t_0t_1...\in E$. By Lemma \ref{yinli2}, there exist sequences of positive integers $n_i\rightarrow\infty$ and $m_i\rightarrow\infty$ satisfying $s_{n_i}=t_{n_i},s_{m_i}\neq t_{m_i}$ for infinitely many $i$. Next we will just prove that $x(s),y(t)$ are uniformly distributively chaotic in a sequence. The whole proof is divided into two steps.

Step 1. For any $\delta>0$, we take $i$ large enough such that $\frac{1}{n_i}<\frac{\delta}{2}$, and by the property of $F^{p_k}(x)$, for $n_i!<k\leq (n_i+1)!$, we have $d_H(F^{p_k}(x), F^{p_k}(y))<\delta$. Further, we have \begin{equation*}
\begin{aligned}
&\frac{1}{n_i+1}\sharp \{1\leq k\leq (n_i+1)!\mid
d_H(F^{p_k}(x), F^{p_k}(y))<\delta\}\\
\geq&\frac{(n_i+1)!-n_i!}{(n_i+1)!}\\
=&1-\frac{1}{n_i+1}\\
\rightarrow& 1 (i\rightarrow\infty).
\end{aligned}
\end{equation*}
Therefore $$\phi_{xy}^{*}(F, \delta, p_{i})=1.$$

Step 2. Put $\varepsilon=\frac{d_H(A_1,A_2)}{2}$. Take $i$ large enough such that $\frac{1}{m_i}<\frac{d_H(A_1,A_2)}{4}$, so $d_H(F^{p_k}(x), F^{p_k}(y))>\varepsilon$ for $m_i!<k\leq (m_i+1)!$. Thus
\begin{equation*}
\begin{aligned}
&\frac{1}{m_i+1}\sharp \{1\leq k\leq (m_i+1)!\mid
d_H(F^{p_k}(x), F^{p_k}(y))<\varepsilon\}\\
\leq &\frac{m_i!}{(m_i+1)!}\\
=& \frac{1}{m_i+1}\\
\rightarrow & 0 (i\rightarrow\infty).
\end{aligned}
\end{equation*}

Therefore $$\phi_{xy}(F, t, p_{i})=0.$$

The entire proof is complete.

\end{proof}

\section{Main results}

\begin{theorem}\label{dingli1}
If $F$ is mixing, then it must be uniformly distributively chaotic in a sequence.
\end{theorem}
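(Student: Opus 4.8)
The plan is to reduce the statement to the hypothesis of Lemma \ref{yinli3} and then invoke that lemma directly. Accordingly, I first fix two distinct elements $A_1,A_2\in R_a(F)$ (such a pair exists in any nondegenerate situation, which is implicit here since otherwise no chaos could occur), and then the whole task becomes producing a single increasing sequence $\{p_k\}_{k=1}^\infty$ of positive integers with the following feature: for every coding $C=C_1C_2\cdots$, where each $C_k$ is $\overline{B(A_1,1/k)}$ or $\overline{B(A_2,1/k)}$, there is a point $x_C\in X$ with $F^{p_k}(x_C)\in e(C_k)$ for all $k\geq 1$. The essential difficulty, which I expect to be the main obstacle, is that $\{p_k\}$ must be fixed \emph{before} and \emph{independently of} the coding $C$; this is exactly the point where the full strength of mixing (rather than mere transitivity or weak mixing) is required, since mixing supplies a threshold beyond which \emph{every} time works, allowing a common choice of $p_k$.

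To build $\{p_k\}$ I would carry out an induction over finite choice strings. Two observations make the set-valued bookkeeping routine: for any $n$ and any open $W\subseteq X$ the set $F^{-n}(e(W))$ is open (as already used in Lemma \ref{yinli22}); and since $A_j\subseteq B(A_j,1/k)$ with $A_j\in R_a(F)$, each target $e(B(A_j,1/k))$ meets $R_a(F)$, so mixing applies with these balls as target sets. Put $U_{\emptyset}=X$. Assume that for every string $w=j_1\cdots j_{k-1}\in\{1,2\}^{k-1}$ a nonempty open set $U_w$ and integers $p_1<\cdots<p_{k-1}$ have been constructed so that $x\in\overline{U_w}$ implies $F^{p_i}(x)\in e(\overline{B(A_{j_i},1/i)})$ for $1\le i\le k-1$. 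For the $k$-th step, apply mixing to each of the finitely many pairs $(U_w,\,B(A_{j_k},1/k))$ with $w\in\{1,2\}^{k-1}$ and $j_k\in\{1,2\}$; each pair gives a threshold past which the source reaches the target, so any $p_k$ exceeding $p_{k-1}$ and all these finitely many thresholds makes $U_w\cap F^{-p_k}\!\big(e(B(A_{j_k},1/k))\big)$ nonempty and open \emph{simultaneously} for all $w$ and $j_k$. Inside each such set I pick a nonempty open $U_{wj_k}$ with $\overline{U_{wj_k}}\subseteq U_w\cap F^{-p_k}\!\big(e(B(A_{j_k},1/k))\big)$, so that $x\in\overline{U_{wj_k}}$ yields $F^{p_i}(x)\in e(\overline{B(A_{j_i},1/i)})$ for all $i\le k$. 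The finiteness of $\{1,2\}^{k-1}$ is precisely what permits a single $p_k$ to serve every string of that length, and hence produces a coding-independent sequence.

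Finally, given an infinite coding $C$ determined by choices $j_1,j_2,\ldots$, the closures $\overline{U_{j_1\cdots j_k}}$ form a decreasing sequence of nonempty compact subsets of $X$ (since $\overline{U_{j_1\cdots j_k}}\subseteq U_{j_1\cdots j_{k-1}}\subseteq\overline{U_{j_1\cdots j_{k-1}}}$), so by compactness their intersection contains a point $x_C$; by construction $F^{p_k}(x_C)\in e(\overline{B(A_{j_k},1/k)})=e(C_k)$ for every $k$. This verifies the hypothesis of Lemma \ref{yinli3} with the pair $A_1\neq A_2$ and the sequence $\{p_k\}$, and its conclusion then gives at once that $F$ is uniformly distributively chaotic in a sequence. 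In summary, the only genuinely delicate step is arranging one sequence $\{p_k\}$ valid for all codings, handled by beating finitely many mixing thresholds at each level; the passage from ``the image enters $B(A_j,1/k)$'' to the open condition $x\in F^{-p_k}(e(B(A_j,1/k)))$, and the extraction of $x_C$ by compactness, are then straightforward.
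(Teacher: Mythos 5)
Your proof is correct and follows essentially the same route as the paper's: induct over the finitely many length-$k$ codings, use mixing to pick one time $p_k$ that works simultaneously for all of them, and extract $x_C$ by compactness before invoking Lemma \ref{yinli3}. Your bookkeeping via nested closures $\overline{U_{j_1\cdots j_k}}$ is in fact a slightly cleaner (and more airtight) version of the paper's passage to a limit of approximating points.
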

\begin{proof}
 Let $A_1',A_2'\in R_a(F)$ with $A_1'\neq A_2'$ and $U_0\subset X$ be any nonempty open set such that
$\overline{U_0}$ is compact set. Noting that $F$ is mixing, one has that there exists $p_1>0$, such that
$$F^{p_1}(U_0)\cap e(\overline{B(A_1',\frac{1}{k})})\neq \emptyset$$ and $$F^{p_1}(U_0)\cap e(\overline{B(A_2',\frac{1}{k})})\neq \emptyset.$$
 So one can find two points $x_1',x_2'$ satisfying $F^{p_1}(x_1')\in e(\overline{B(A_1',\frac{1}{k})})$,$F^{p_1}(x_2')\in e(\overline{B(A_2',\frac{1}{k})})$.  Next we will prove the conclusion by induction. Assuming that there exist positive integers $p_1<p_2<...<p_k$ such that for any finite sequence $A_1A_2...A_k$, where $A_i\in \{B(A_1',\frac{1}{i}), B(A_2',\frac{1}{i})\}$, there exists $x_k\in U_0$ satisfying $F^{p_i}(x_k)\in e(A_i)$ for $i=1,2,...,k$. We denote the set of all such points by $S_k$. For any $x\in S_k$, since $F^k$ is continuous for any $k\in\mathbb{N}$, there exists an open nonempty neighborhood $W_x\subset U_0$ such that $F^{p_i}(W_x)\subset e(A_i)$ ($i=1,2,...,k$). By the Lemma \ref{yinli22} and the assumptions that $F$ is mixing, $F_m$ is transitive for any $m\in \mathbb{N}$, then there exists a positive integer $p_{k+1}$ with $p_{k+1}>p_{k}$, such that
$$F^{p_{k+1}}(W_x)\cap e(\overline{B(A_1',\frac{1}{k+1})})\neq \emptyset$$ and $$F^{p_{k+1}}(W_x)\cap e(\overline{B(A_2',\frac{1}{k+1})})\neq \emptyset$$
for any $x\in S_k$. So there is a $x_{k+1}\in U_0$ satisfying $F^{p_i}(x_{k+1})\in e(A_i)$ for $i=1,2,...,k,k+1$.
By induction assumption, there is a sequence of positive integers $p_k\rightarrow\infty$  such that for each finite sequence $A_1A_2...A_k$,  there exists $x_k\in U_0$ satisfying $F^{p_i}(x)\in e(A_i)$ for $i=1,2,...,k$.
Let $C=C_1C_2...$ be an infinite sequence, where $C_k=\{e(\overline{B(A_1,\frac{1}{k})}), e(\overline{B(A_2,\frac{1}{k})})\}$. For any $k$, there exists point $x_k\in \overline{U_0}$ such that $F^{p_i}(x_k)\in e(C_i)$ for $i=1,2,...,k$. Noting that $\overline{U_0}$ is compact, one gets that the sequence $\{x_i\}_{i=1}^{\infty}$ converges to a point $x_c$ in $\overline{U_0}$. It is easily verified that for any $k\in \mathbb{N}$, we have $F^{p_i}(x_c)\in C_k$. Therefore, by Lemma \ref{yinli3}, $F$ is uniformly distributively chaotic in a sequence.

\end{proof}

%Notice that uniformly distributively chaotic in a sequence implies distributively chaotic in a sequence and Li-Yorke chaos by definitions, so we have the following result by the above theorem.
\begin{corollary}\label{tuilun}
If $F$ is mixing, then it must be distributively chaotic in a sequence and Li-Yorke chaos.
\end{corollary}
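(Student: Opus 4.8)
The plan is to read off Corollary \ref{tuilun} from Theorem \ref{dingli1} together with the remark that $\delta$-chaos in a sequence implies chaos in a sequence, plus an elementary comparison of $\liminf$/$\limsup$ along a subsequence. By Theorem \ref{dingli1}, mixing of $F$ already supplies an uncountable set $S\subset X$, a fixed constant $\delta>0$, and an increasing sequence $\{p_k\}$ of positive integers such that every pair of distinct points $x,y\in S$ satisfies $\phi^*_{xy}(F,t,p_i)=1$ for all $t>0$ and $\phi_{xy}(F,\delta,p_i)=0$. The key point is that this one set $S$ (with this one $\delta$) will witness both required forms of chaos, so no new construction is needed; the work is purely in translating the conclusion of the theorem into the two target definitions.

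For the distributional part, the observation is definitional: distributional $\delta$-chaos in a sequence is exactly distributional chaos in a sequence in which the exceptional value $\varepsilon$ in the lower-distribution condition has been chosen to be the uniform constant $\delta$. Hence, taking $\varepsilon=\delta$ in the definition of Hausdorff metric distributional chaos in a sequence and invoking the remark following it, the set $S$ immediately witnesses that $F$ is distributively chaotic in a sequence.

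For the Li-Yorke part, I would fix distinct $x,y\in S$ and convert the two density statements into the $\liminf$/$\limsup$ requirements of the definition of Hausdorff metric Li-Yorke chaos. First, from $\phi^*_{xy}(F,t,p_i)=1$ for every $t>0$ I would argue that the index set $\{i\mid d_H(F^{p_i}(x),F^{p_i}(y))<t\}$ is infinite for each $t>0$, since otherwise the averaged counting function would tend to $0$, contradicting $\limsup=1$. Applying this with $t=\tfrac1j$ and selecting indices diagonally produces a subsequence of $\{p_i\}$, hence a subsequence of $\mathbb{N}$, along which $d_H(F^n(x),F^n(y))\to 0$; as $d_H\ge 0$ this yields $\liminf_{n\to\infty} d_H(F^n(x),F^n(y))=0$. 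Second, from $\phi_{xy}(F,\delta,p_i)=0$ I would select a subsequence $\{n_\ell\}$ along which the averaged count of indices with $d_H<\delta$ tends to $0$, so each large block $\{0,\dots,n_\ell-1\}$ must contain indices $i$ with $d_H(F^{p_i}(x),F^{p_i}(y))\ge\delta$; these accumulate to infinitely many such $i$, giving $\limsup_i d_H(F^{p_i}(x),F^{p_i}(y))\ge\delta$. Since $\{p_i\}$ is a subsequence of $\mathbb{N}$, the $\limsup$ over all of $\mathbb{N}$ dominates the $\limsup$ over $\{p_i\}$, whence $\limsup_{n\to\infty} d_H(F^n(x),F^n(y))\ge\delta>0$. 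Thus every distinct pair in $S$ is a Li-Yorke pair (indeed a uniform one of modulus $\delta$), and $S$ witnesses Li-Yorke chaos.

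The only genuinely substantive step is this passage from the averaged distributional conditions along $\{p_i\}$ to the plain $\liminf$/$\limsup$ conditions along all of $\mathbb{N}$. The mild care it requires is twofold: reading $\phi^*=1$ and $\phi=0$ as the existence of infinitely many indices of each type, and observing that replacing the subsequence $\{p_i\}$ by the full integer sequence can only decrease the $\liminf$ and increase the $\limsup$, which is precisely the direction needed. Everything else is definitional bookkeeping, so I expect no serious obstacle.
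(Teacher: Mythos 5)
Your proposal is correct, and it is worth noting that the paper itself supplies no proof of this corollary at all: it is stated as an immediate consequence of Theorem \ref{dingli1}, with the distributional part covered by the remark that $\delta$-chaos in a sequence implies chaos in a sequence, and the Li-Yorke part implicitly resting on the equivalence (cited in the introduction from reference [15] of the paper) between Hausdorff metric Li-Yorke $\delta$-chaos and Hausdorff metric distributional $\delta$-chaos in a sequence. Your treatment of the distributional half coincides with the paper's intended reading. For the Li-Yorke half you take a genuinely different and more self-contained route: instead of invoking the external equivalence, you extract the two Li-Yorke conditions directly from the distributional data along $\{p_i\}$ --- infinitude of the small-distance index set for each $t>0$ (else the averaged count could not have $\limsup$ equal to $1$) plus a diagonal selection for the $\liminf$, and infinitude of the $\delta$-separated index set (else the lower density would be $1$, not $0$) for the $\limsup$, followed by the monotonicity of $\liminf$/$\limsup$ under passage from the subsequence $\{p_i\}$ to all of $\mathbb{N}$. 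All of these steps are sound; the one place requiring care, ruling out that the $\delta$-separated indices are confined to a bounded set, is handled correctly since the count of such indices in $\{0,\dots,n_\ell-1\}$ is $n_\ell-o(n_\ell)$. What your approach buys is independence from the cited equivalence and an explicit verification that every pair in $S$ is in fact a uniform Li-Yorke pair of modulus $\delta$; what the paper's implicit route buys is brevity at the cost of an external reference.
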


\begin{theorem}\label{ag}
Mixing implies Kato's chaos.
\end{theorem}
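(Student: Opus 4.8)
The plan is to unwind the definition of Kato's chaos and establish its two constituent properties, \emph{sensitivity} and \emph{accessibility}, separately, in each case feeding the relevant target open set(s) into the mixing hypothesis. Throughout I would fix two distinct sets $A_1, A_2 \in R_a(F)$ with $A_1 \neq A_2$ (as in the proof of Theorem \ref{dingli1}; if $R_a(F)$ were a singleton the situation would be degenerate) and set the sensitivity constant $\delta = \tfrac{1}{3} d_H(A_1, A_2) > 0$, which depends only on $F$ and not on any chosen open set.

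For sensitivity, let $U \subseteq X$ be an arbitrary nonempty open set and choose $r > 0$ small (how small is dictated by the estimate below). Put $V_1 = B(A_1, r)$ and $V_2 = B(A_2, r)$; since $A_i \subseteq V_i$ and $A_i \in R_a(F)$, each $V_i$ meets $R_a(F)$, so mixing applies to the pairs $(U, V_1)$ and $(U, V_2)$. Thus there is an $N$ such that for every $n \geq N$ one can select $x, y \in U$ with $F^n(x) \in e(V_1)$ and $F^n(y) \in e(V_2)$; in particular $x$ and $y$ lie in the \emph{same} set $U$ and are iterated the same number of times. I would then show $d_H(F^n(x), F^n(y)) > \delta$ by the computation in Step 2 of the proof of Lemma \ref{yinli3}, which forces the two images apart once $r$ is small relative to $d_H(A_1, A_2)$. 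This yields sensitivity with the uniform constant $\delta$.

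For accessibility, fix $\varepsilon > 0$ and nonempty open sets $U, V \subseteq X$, and fix any $A_0 \in R_a(F)$. Set $W = B(A_0, \varepsilon/2)$, which again meets $R_a(F)$ because $A_0 \subseteq W$. Applying mixing to $(U, W)$ and to $(V, W)$ gives integers $N_1, N_2$ with $F^n(U) \cap e(W) \neq \emptyset$ for $n \geq N_1$ and $F^n(V) \cap e(W) \neq \emptyset$ for $n \geq N_2$; taking any common $n \geq \max\{N_1, N_2\}$ produces $x \in U$ and $y \in V$ with $F^n(x), F^n(y) \in e(W)$. The estimate in Step 1 of the proof of Lemma \ref{yinli3} then gives $d_H(F^n(x), F^n(y)) < \varepsilon$, which is exactly accessibility. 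Combining the two parts shows $F$ is sensitive and accessible, hence Kato's chaotic.

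I expect the main obstacle to be the two Hausdorff-metric estimates, that is, converting the containment statements $F^n(\,\cdot\,) \in e(B(A_i, r))$ into the quantitative inequalities $d_H(F^n(x), F^n(y)) > \delta$ and $d_H(F^n(x), F^n(y)) < \varepsilon$. The delicate point is that $e(\,\cdot\,)$ controls only the one-sided (upper) part of the Hausdorff metric, so the radii $r$ and $\varepsilon/2$ must be calibrated carefully against $d_H(A_1, A_2)$; I would handle this exactly as in the two steps of the proof of Lemma \ref{yinli3}. A secondary point worth stressing is that the argument genuinely uses full mixing rather than mere weak mixing or transitivity: the ``for all $n \geq N$'' clause is what lets me realize the two containment conditions at one common iterate $n$, both within a single $U$ (for sensitivity) and across the pair $U, V$ (for accessibility).
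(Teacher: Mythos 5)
There is a genuine gap, and it sits exactly at the point you flag as ``delicate'': for a compact set $A$ that is not a singleton, the containment $K\in e(B(A,r))$, i.e.\ $K\subseteq B(A,r)$, controls only $dist(K,A)=\sup_{p\in K}\inf_{q\in A}d(p,q)$ and gives no bound on $dist(A,K)$, hence none on $d_H(K,A)$. Your accessibility step therefore fails: from $F^n(x),F^n(y)\in e\bigl(B(A_0,\varepsilon/2)\bigr)$ you cannot conclude $d_H(F^n(x),F^n(y))<\varepsilon$, because $B(A_0,\varepsilon/2)$ may have large diameter. Concretely, if $A_0=\{0,1\}\subseteq[0,1]$ and $\varepsilon=1/10$, then $B(A_0,\varepsilon/2)=[0,1/20)\cup(19/20,1]$, and $F^n(x)=\{0\}$, $F^n(y)=\{1\}$ both lie in $e\bigl(B(A_0,\varepsilon/2)\bigr)$ while $d_H(F^n(x),F^n(y))=1$. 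The sensitivity step has the dual problem: $d_H(A_1,A_2)>0$ does not separate the neighborhoods $B(A_1,r)$ and $B(A_2,r)$ (take $A_1=\{0\}$, $A_2=\{0,1\}$, so $d_H(A_1,A_2)=1$ but $B(A_1,r)\subseteq B(A_2,r)$), and then $F^n(x)=F^n(y)=\{0\}$ satisfies both containments with $d_H(F^n(x),F^n(y))=0$. Deferring to Steps 1--2 of Lemma \ref{yinli3} does not close either gap, since those steps rest on the same one-sided inference.

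The paper's own proof of Theorem \ref{ag} sidesteps both problems by a different choice of targets: for accessibility it feeds mixing a single open set $W$ with $\mathrm{diam}(W)<\varepsilon/2$, so that $F^n(x),F^n(y)\subseteq W$ forces $d_H(F^n(x),F^n(y))\le \mathrm{diam}(W)<\varepsilon$; for sensitivity it uses balls $B(x_1,r/4)$, $B(x_2,r/4)$ around two distinct \emph{points}, for which containment does control the full Hausdorff distance to the singleton and the triangle inequality yields the lower bound $r/2$. Your instinct to fatten elements of $R_a(F)$ is motivated by the admissibility condition $V\cap R_a(F)\neq\emptyset$ in the mixing definition (a hypothesis the paper's proof indeed does not verify for its small sets $W$ and $B(x_i,r/4)$), but the fix trades away precisely the metric control the estimates need. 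A correct argument must produce targets that are simultaneously admissible and metrically controlled --- small-diameter sets still containing an element of $R_a(F)$, and two such sets that are uniformly separated --- which requires more information about $R_a(F)$ than either your proposal or the fattening construction supplies.
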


\begin{proof}
(i) For any $\varepsilon>0$ and nonempty open sets $U,V,W$ of $X$ with $diam(W)<\varepsilon/2$.
Since $F$ is mixing, there exists $n\geq1$ such that $F^n(U)\cap W \neq \emptyset$ and $F^n(V)\cap W \neq \emptyset$.
That is $\exists x\in U,y\in V$ such that $F^n(x)\in e(W)$ and $F^n(y)\in e(W)$. Therefore $d_H(F^n(x),F^n(y))\leq diam(W)<\varepsilon$.

(ii) Take $x_1,x_2\in X$ with $x_1\neq x_2$, Note $r=d(x_1,x_2)$ and $\delta=r/2$.  For any nonempty open set $U$ of $X$. Since $F$ is mixing, there exists $n\geq1$ such that $F^n(U)\cap e(B(x_1,r/4)) \neq \emptyset$ and $F^n(V)\cap e(B(x_1,r/4)) \neq \emptyset$. That is $\exists y_1\in U$ such that $F^n(y_1)\in e(B(x_1,r/4))$ and  $\exists y_2\in U$ such that $F^n(y_1)\in e(B(x_2,r/4))$. So we have $d_H(F^n(y_1),x_1)\leq r/4$ and $d_H(F^n(y_2),x_2)\leq r/4$. Therefore,
$d_H(F^n(y_1),F^n(y_2))> r/2=\delta$.
\end{proof}

\begin{theorem}
 $F$ is Kato's chaos if and only if $F^k$ is Kato's chaos $(k\geq2)$.
\end{theorem}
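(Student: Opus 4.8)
The plan is to run both implications off the semigroup identities for the induced maps, namely $(F^{k})^{q}=F^{kq}$ and $F^{kq+r}(x)=\widetilde{F}^{r}\bigl(F^{kq}(x)\bigr)$ for $0\le r<k$, together with the uniform continuity of the finitely many maps $F^{r}\colon X\to\mathcal{K}(X)$ and $\widetilde{F}^{r}\colon\mathcal{K}(X)\to\mathcal{K}(X)$ ($0\le r<k$), which holds because $X$ and $(\mathcal{K}(X),d_{H})$ are compact and these maps are continuous by \cite{a1}. The crucial bookkeeping point is that the $q$-th iterate of the multiple mapping $F^{k}$ acts on $X$ exactly as $F^{kq}$, so that sensitivity and accessibility of $F^{k}$ are measured through the quantities $d_{H}\bigl(F^{kq}(x),F^{kq}(y)\bigr)$.

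For the easy direction, suppose $F^{k}$ is Kato's chaos. Any time $q$ witnessing sensitivity (respectively accessibility) of $F^{k}$ corresponds to the time $kq\in\mathbb{N}$ for $F$, so the same sensitivity constant and the very same pairs of points witness sensitivity (respectively accessibility) of $F$ verbatim; this half needs no continuity input at all.

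For the converse, assume $F$ is Kato's chaos with sensitivity constant $\delta$. To obtain sensitivity of $F^{k}$, I first use uniform continuity to choose $\alpha>0$ so that $d(x,y)<\alpha$ forces $d_{H}\bigl(F^{r}(x),F^{r}(y)\bigr)<\delta$ for every $r\in\{0,1,\dots,k-1\}$, and $\beta>0$ so that $d_{H}(A,B)<\beta$ forces $d_{H}\bigl(\widetilde{F}^{r}(A),\widetilde{F}^{r}(B)\bigr)<\delta$ for every such $r$. Given a nonempty open $U$, I shrink it to an open $U'\subseteq U$ with $\operatorname{diam}(U')<\alpha$ and apply sensitivity of $F$ to get $x,y\in U'$ and $N\ge 1$ with $d_{H}\bigl(F^{N}(x),F^{N}(y)\bigr)>\delta$. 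Writing $N=kq+r$, the choice of $\alpha$ excludes $q=0$ (otherwise $N=r\in\{1,\dots,k-1\}$ while $d(x,y)<\alpha$ would give $d_{H}\bigl(F^{N}(x),F^{N}(y)\bigr)<\delta$), so $q\ge 1$; and since $F^{N}(x)=\widetilde{F}^{r}\bigl((F^{k})^{q}(x)\bigr)$, the choice of $\beta$ forces $d_{H}\bigl((F^{k})^{q}(x),(F^{k})^{q}(y)\bigr)\ge\beta$. Hence $F^{k}$ is sensitive with constant $\beta/2$.

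For accessibility of $F^{k}$, the guiding observation is that closeness is \emph{preserved} under applying the maps $\widetilde{F}^{s}$ forward, so instead of trying to move a witnessing time backward to a multiple of $k$ (which cannot be controlled) I extend it forward to the next multiple of $k$. Given $\varepsilon'>0$ and nonempty open $U,V$, I pick $\gamma\in(0,\varepsilon']$ with $d_{H}(A,B)<\gamma\Rightarrow d_{H}\bigl(\widetilde{F}^{s}(A),\widetilde{F}^{s}(B)\bigr)<\varepsilon'$ for all $s\in\{0,\dots,k-1\}$, apply accessibility of $F$ to obtain $x\in U$, $y\in V$, $N\ge 1$ with $d_{H}\bigl(F^{N}(x),F^{N}(y)\bigr)<\gamma$, write $N=kq+r$, and set $s=(k-r)\bmod k$ so that $F^{\,k\lceil N/k\rceil}(x)=\widetilde{F}^{s}\bigl(F^{N}(x)\bigr)$ is a genuine positive iterate of $F^{k}$ (with $q'=\lceil N/k\rceil\ge 1$); the choice of $\gamma$ then yields $d_{H}\bigl((F^{k})^{q'}(x),(F^{k})^{q'}(y)\bigr)<\varepsilon'$. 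Combining the two parts shows $F^{k}$ is Kato's chaos. I expect the accessibility step to be the main obstacle: unlike sensitivity, where one transports a lower bound along the continuous induced maps via the contrapositive, accessibility demands producing closeness at a time divisible by $k$, and the non-obvious point is that one must push forward to $k\lceil N/k\rceil$ rather than attempt to descend to $kq$. The subsidiary care needed to exclude the degenerate iterate $q=0$ in the sensitivity step is the other place where compactness (uniform continuity) is genuinely used.
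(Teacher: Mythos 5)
Your proposal is correct and follows essentially the same route as the paper: the easy direction is immediate since $(F^{k})^{q}=F^{kq}$, sensitivity of $F^{k}$ is obtained by using uniform continuity to exclude witnessing times $N<k$ and then transporting the separation at time $N=kq+r$ back to time $kq$ by the contrapositive, and accessibility is obtained by pushing the witnessing time forward to the next multiple of $k$. If anything, you are slightly more careful than the paper in distinguishing the uniform continuity of $F^{r}$ on $X$ from that of the induced maps $\widetilde{F}^{r}$ on $(\mathcal{K}(X),d_{H})$, which the paper's Step 1 estimate tacitly conflates when it applies a pointwise modulus of continuity to the compact sets $F^{kn_{1}}(x)$.
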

\begin{proof}
Sufficiency is obvious. We will prove the necessity in two steps.

 Step 1. Since  $F$ is Kato's chaos, there exists a $\delta>0$ such that for any nonempty open set $U$ of $X$, there exist $x,y\in U,n\geq1$, such that
 \begin{equation}\label{lian}
d_H(F^n(x),F^n(y))>\delta.
\end{equation}

  Since $F$ is continuous and $X$ is  compact metric space, $F^j$ is uniformly continuous $(\forall j=1,2,...,k)$. Therefore, for given $\delta$ above, there exists $\delta_1>0$, such that when $d(u,v)<\delta_1$, we have
\begin{equation}\label{lianxu}
d_H(F^j(u),F^j(v))<\delta (j=1,2,...,k).
\end{equation}
 Firstly, we will claim that there exists $n>k$. If not, then take a nonempty open set $U_1\subseteq U$ with $diam(U_1)<\delta_1$, on the one hand, by (\ref{lianxu}) we have that $\forall x,y\in U_1,n\leq k$, $d(F^n(x),F^n(x))<\delta$, on the other hand, there exist $x,y\in U,n\geq1$, such that $d_H(F^n(x),F^n(x))>\delta$. This is a contradiction. So there exist $x,y\in U,n\geq k$, such that $d_H(F^n(x),F^n(x))>\delta$. Now, take $j$ satisfying $n=kq+j$, where $q,j$ are positive integer and $1\leq j\leq k$. Secondly,  we will claim that there exists $n_1\geq1$, such that $d_H(F^{kn_1}(x),F^{kn_1}(y))>\delta_1/2$. If not, then for any $n_1\geq1$, we have $d_H(f^{kn_1}(x),f^{kn_1}(y))\leq \delta_1/2$, by (\ref{lianxu}) we have that  $d_H(F^{kn_1+j}(x),F^{kn_1+j}(y))\leq \delta$, therefore $d(F^n(x),F^n(y))\leq \delta$, which is a contradiction to (\ref{lian}).

 Step 2. For any $\varepsilon>0$ and nonempty open sets $U,V$ of $X$. Since $F^{j'}$ is uniformly continuous $(\forall j'=1,2,...,k)$ we have that there exists $\delta_2>0$, such that when $d(u,v)<\delta_2$, we have
\begin{equation}\label{lianxu1}
d_H(F^{j'}(u),F^{j'}(v))<\varepsilon (\forall j'=1,2,...,k).
\end{equation}
Since  $F$ is Kato's chaos, for $\delta_2$ above, there exist $x\in U, y\in V, n'\geq1$ such that $d_H(F^{n'}(x),F^{n'}(y))<\delta_2$. Take $j'$ satisfying $n'+j'=kq'$, where $q',j'$ are positive integer and $1\leq j'\leq k$. By (\ref{lianxu1}), $d_H(F^{n'+j'}(x),F^{n'+j'}(y))<\varepsilon$, that is $d_H(F^{kq'}(x),F^{kq'}(y))<\varepsilon$. Therefore, for any $\varepsilon>0$ and nonempty open sets $U,V$ of $X$, there exist $x\in U, y\in V, q' \geq1$ such that $d_H(F^{kq'}(x),F^{kq'}(y))<\varepsilon$.

\end{proof}

\section{Examples}
Firstly, we provide some examples to show that the definition of mixing (weakly mixing and transitive) of multiple mappings is well.

%\begin{example}
%Supposed $f_1,f_2$ are continuous from $[0,1]$ to itself. Let $F=\{f_1,f_2\}$, where $f_1(x)=0,\forall x\in[0,1],f_2(x)=1,\forall x\in[0,1]$. Then $R(F)=\{\{0,1\}\}$, and $F(0)=F(1)=\{0,1\}$. So $F$ is transitive.
%\end{example}

\begin{example}
Supposed $f_1,f_2$ are continuous from $[0,1]$ to itself. Let $F=\{f_1,f_2\}$, where $f_1(x)\equiv0,\forall x\in[0,1]$ and
\[f_2=\begin{cases}
2x, &  x\in[0,1/2], \\
2-2x, &  x\in(1/2,1].
\end{cases}\]
Then $R_a(F)=\{\{0,f_2^n(x)\}\mid n\geq 1,x\in [0,1]\}$. Let $U$ and $V$ be any nonempty open sets  of $X$ with $V \cap R_a(F)\neq \emptyset$, As is well-know, $f_2$ is the tent map, further $f_2$ is mixing, So it is easily verified that $F$ is mixing.
\end{example}

\begin{remark}
The Example above can be generalized. That is if $F=\{f_1,f_2\}$, where $f_1(x)\equiv a (a\in X)$, and $f_2$ is mixing (transitive,respectively) map in the classical sense, then $F$ is mixing (transitive,respectively).
\end{remark}

\begin{example}
Supposed $f_1,f_2$ are continuous from $[0,1]$ to itself. Let $F=\{f_1,f_2\}$, where
\[f_1=\begin{cases}
2x, &  x\in[0,1/2], \\
1, &  x\in(1/2,1].
\end{cases}\]
and
\[f_2=\begin{cases}
1, &  x\in[0,1/2], \\
2-2x, &  x\in(1/2,1].
\end{cases}\]

Let

\[f=\begin{cases}
2x, &  x\in[0,1/2], \\
2-2x, &  x\in(1/2,1].
\end{cases}\]

Then $R_a(F)=\{\{0,1,f^n(x)\}\mid n>1,x\in [0,1]\}$.  Let $U$ and $V$ be any nonempty open sets  of $X$ with $V \cap R_a(F)\neq \emptyset$, As is well-know, $f$ is the tent map, further $f$ is mixing, So it is easily verified that $F$ is mixing.
\end{example}

\begin{example}
Supposed $f_1,f_2$ are continuous from $[0,1]$ to itself. Let $F=\{f_1,f_2\}$, where
\[f_1=\begin{cases}
2x, &  x\in[0,1/2], \\
2-2x, &  x\in(1/2,1].
\end{cases}\]
and
\[f_2=\begin{cases}
1-2x, &  x\in[0,1/2], \\
2x-1, &  x\in(1/2,1].
\end{cases}\]

Notice that for any $x\in[0,1]$ and any $n>0$, $F^n(x)=\{f_1^n(x),f_2^n(x)\}$ and $f_1^n(x)+f_2^n(x)=1$.
Then $R_a(F)=\{\{f_1^n(x),f_2^n(x)\}\mid n\geq1,x\in [0,1]\}=\{\{f_1^n(x),1-f_1^n(x)\}\mid n\geq1,x\in [0,1]\}$. Let $U$ and $V$ be any nonempty open sets  of $X$ with $V \cap R_a(F)\neq \emptyset$, As is well-know, $f_1$ is the tent map, further $f_1$ is mixing, So it is easily verified that $F$ is mixing.
\end{example}

\begin{remark}
The multiple mappings $F$ of Example 1 or Example 3 has been shown to be Hausdorff metric Kato's chaos by definition\cite{a14}, we can also show that it is Hausdorff metric Kato's chaos by Theorem \ref{ag}. The multiple mappings $F$ of Example 2 has been shown to be Hausdorff metric Li-Yorke chaos by definition\cite{a15}, we can also show that it is Hausdorff metric Li-Yorke chaos by Corollary \ref{tuilun}.
\end{remark}

Next we will give an example, which is Hausdorff metric distribution chaos but has only two strongly non-wandering points. We have the conclusion that if $F$ is Hausdorff metric disdributionally chaotic, then there exists at least two strongly nonwandering points of $F$ from \cite{a14}. This example indicates that Hausdorff metric distributional chaos may be generated by only two strongly non-wondering points.
Firstly, we construct a sequence symbol $\{A_n\}_{n=1}^\infty$. Let $A_1=10111$. For $n\geq1$, define $A_{n+1}=A_nO_nA_nI_nA_n$ inductively, where the $O_n$ and $I_n$ have the same length as $A_n$, and $O_n$ consists only of the symbol $0$'s while $I_n$ consists only of the symbol $1$'s. Denote by $|B|$ the length of finite symbol sequence $B$. Obviously, $|A_n|=5^n,\forall n\geq1$. As $n\rightarrow\infty$, then $A_n$ enlarge to a one-side infinitely sequence, denote by $u$. Let $X$ be the $\omega$-limit set of $u$ with the shift map $\sigma$, that is $X=\omega(u,\sigma)$, which is a subspace of $\Sigma_2$. Let $a$ be the infinitely sequence consists only of the symbol $0$'s and $b$ be the infinitely sequence consists only of the symbol $1$'s.

\begin{example}
 The  dynamical system $(X,F=\{\sigma,f_0\})$ has only two strongly nonwandering points $a$ and $b$, but is Hausdorff metric distributional chaos, where $\sigma$ is a shift
map and $f_0\equiv a$.
\end{example}

\begin{proof}
Firstly, we prove that $(X,F)$ has only two strongly nonwandering points.

Obviously, $a,b\in X$ and $a,b$ are strongly nonwandering points. For any infinitely sequence $x\notin\{a,b\}$. We  just claim that the $x$ is not strongly nonwandering point of $X$. Take $n$ large enough such that the finite symbol sequence $x[0,5^n-1]\notin \{O_n,I_n\}$. Denote $B_n=x[0,5^n-1]$. Take $m=10n$. Note that for the symbol $A_{m+1}=A_mO_mA_mI_mA_m$, $B_n$ cannot be in $O_m$ and $I_m$, so we have $$\frac{1}{5^{m+1}}\sharp \{0\leq i\leq 5^{m+1}-1\mid
A_{m+1}[i,i+5^n-1]=B_n\}\leq \frac{4}{5},$$ that is the frequency of $B_n$ in $A_{m+1}$ is no more than $ \frac{4}{5}$, the frequency denotes by $v$.
Similarly, for the symbol $A_{m+2}=A_{m+1}O_{m+1}A_{m+1}I_{m+1}A_{m+1}$, $B_n$ cannot be in $O_{m+1}$ and $I_{m+1}$, so we have $$\frac{1}{5^{m+2}}\sharp \{0\leq i\leq 5^{m+2}-1\mid
A_{m+1}[i,i+5^n-1]=B_n\}\leq \frac{4}{5}v\leq{(\frac{4}{5})}^2.$$ By induction, for any $k\geq m+1$, we have
$$\frac{1}{5^{k}}\sharp \{0\leq i\leq 5^k-1\mid
A_{k}[i,i+5^n-1]=B_n\}\leq{(\frac{4}{5})}^{k-m},$$ Therefore,
$$\lim_{k\rightarrow\infty}\frac{1}{5^{k}}\sharp \{0\leq i\leq 5^k-1\mid
A_{k}[i,i+5^n-1]=B_n\}=0.$$

By the construction of $u$, we know that for any $k\geq1$, $u$ is composed of $A_k,O_k$ and $I_k$, but $B_n$ can not be in $I_k$ and $O_k$.
So, for the infinite sequence $\{D_r\}_{r=1}^\infty$, if each $D_r$ is a finite symbol sequence with length $r$ and is in infinite sequence $u$, then we have
\begin{equation}\label{lizi}
\lim_{r\rightarrow\infty}\frac{1}{r}\sharp \{0\leq i\leq r-1\mid
D_{r}[i,i+5^n-1]=B_n\}=0.
\end{equation}

For any $y\in X$. $D_r=y[0,r-1]$ denotes finite symbol sequence of $y$ with the length $r$. Since $y\in \omega(u,\sigma)$, each $D_r$ must be in $u$. So (\ref{lizi}) works.
 Take the open set $[B_n]=\{z\in X\mid z[0,5^n-1]=B_n\}$ of $x$, then by (\ref{lizi}) we have
 \begin{equation}\label{lizi1}
\lim_{r\rightarrow\infty}\frac{1}{r}\sharp \{0\leq i\leq r-1\mid
\sigma^i(y)\in [B_n]\}=0.
\end{equation}

On the other hand, since $f_0\equiv a\notin [B_n]$, we have
 \begin{equation}\label{lizi2}
\lim_{r\rightarrow\infty}\frac{1}{r}\sharp \{0\leq i\leq r-1\mid
f_0^i(y)\in [B_n]\}=0.
\end{equation}

Therefore, combine (\ref{lizi1}) and (\ref{lizi2}), one has $$\lim_{r\rightarrow\infty}\frac{1}{r}\sharp \{0\leq i\leq r-1\mid
F^i(y)\in [B_n]\}=0.$$
By the arbitrariness of $y$, $x$ is not strongly nonwandering point of $(X,F)$.

Secondly, we prove that $(X,F)$ is Hausdorff metric disdributionally chaotic.

Let $H$ be the set $\{x=E_1E_2...E_k...\}$, where $E_k\in \{I_{2^k}A_{2^k},O_{2^k}A_{2^k}I_{2^k}A_{2^k}\}$, $\forall k\geq1$. Obviously, $H$ is uncountable set.
Denote $s_k(x)=E_1E_2...E_k$. By induction hypotheses, one can easily see that for any $ k\geq1$, the finite symbol sequence $s_k(x)$ happens to be the tail of $A_{2^k+1}$. So we have $x\in X$, further $H\subset X$. It is obvious that the set $H$ has a similar structure to $\Sigma_2$, by Lemma \ref{yinli2}, there exists an uncountable set $S\subset H$ such that for any different points $x=E_1E_2...E_k...,y=F_1F_2...F_k...$ in $S$, $E_{n}=F_{n}$ for infinitely many $n$ and $E_{m}\neq F_{m}$ for infinitely many $m$. Next we will claim that $\{x,y\}\subset S$ must be disdributionally chaotic, hence, $(X,F)$ Hausdorff metric is disdributionally chaotic.

For any $k\geq2$, put $p_k= max\{|s_{k-1}(x)|,|s_{k-1}(y)|\}$, $q_k=5^{2^k}$. Since both $s_{k-1}(x)$ and $s_{k-1}(y)$ are the tail of $A_{2^{k-1}+1}$, $p_k\leq5^{2^{k-1}+1}$, further, we have
 \begin{equation}\label{lizi3}
\lim_{k\rightarrow\infty}\frac{p_k}{q_k}=0.
\end{equation}

Case 1. If $k\geq2$ satisfying $E_k=F_k$. Without loss of generality assume that $E_k=F_k=I_{2^k}A_{2^k}$. By the construction of $x,y$, we have that both $x[p_k,q_k]=y[p_k,q_k]$ are the part of $A_{2^k}$. Take infinitely sequence $\{k_m\}_{m=1}^\infty$ with $E_{k_m}=F_{k_m}, \forall m\geq1$. Notice that (\ref{lizi3}), We have
$$\lim_{m\rightarrow\infty}\frac{1}{q_{k_m}}\sharp \{0\leq i\leq q_{k_m}-1\mid
x[i,i+s]=y[i,i+s]\}=1,\forall s\geq0.$$
So $$\lim_{m\rightarrow\infty}\frac{1}{q_{k_m}}\sharp \{0\leq i\leq q_{k_m}-1\mid
d(\sigma^i(x),\sigma^i(y))<t\}=1,\forall t>0.$$
Therefore
 \begin{equation}\label{lizi4}
\limsup_{n\rightarrow\infty}\frac{1}{n}\sharp \{0\leq i\leq n-1\mid
d(\sigma^i(x),\sigma^i(y))<t\}=1,\forall t>0.
\end{equation}
Notice that $f_0^i(x)=f_0^i(y)=a, \forall i>0$ and (\ref{lizi4}), then we have
$$\limsup_{n\rightarrow\infty}\frac{1}{n}\sharp \{0\leq i\leq n-1\mid
d_H(F^i(x),F^i(y))<t\}=1,\forall t>0.$$
That is $\phi^*_{xy}(F, t)=1$ for all $t>0$.

Case 2. If $k\geq2$ satisfying $E_k\neq F_k$. Then for $x[p_k,q_k]$ and $y[p_k,q_k]$, one is part of $O_{2^k}$ and the other is part of $I_{2^k}$. Take infinitely sequence $\{k_m\}_{m=1}^\infty$ with $E_{k_m}\neq F_{k_m}, \forall m\geq1$. Notice that (\ref{lizi3}), We have
$$\lim_{m\rightarrow\infty}\frac{1}{q_{k_m}}\sharp \{0\leq i\leq q_{k_m}-1\mid
d(\sigma^i(x),\sigma^i(y))=1\}=1.$$
Therefore
 \begin{equation}\label{lizi5}
\liminf_{n\rightarrow\infty}\frac{1}{n}\sharp \{0\leq i\leq n-1\mid
d(\sigma^i(x),\sigma^i(y))<1\}=0.
\end{equation}
Notice that $f_0^i(x)=f_0^i(y)=a, \forall i>0$ and (\ref{lizi5}), we have
$$\liminf_{n\rightarrow\infty}\frac{1}{n}\sharp \{0\leq i\leq n-1\mid
d_H(F^i(x),F^i(y))<1\}=0.$$
That is $\phi_{xy}(F, 1)=0$. The entire proof is complete.

\end{proof}

%\section*{Acknowledgments}

%This research was supported by the Scientific Research Foundation of Hunan Provincial Education Department(No.23C0148, No.22C0147). The author would like to express their gratitude to the anonymous
%referees for their valuable comments and suggestions.

%\bibliography{mybibfile}

\end{document}